\documentclass[12pt]{article}

\usepackage[a4paper]{geometry}
\setlength{\textwidth}{6.3in}
\setlength{\textheight}{8.7in}
\setlength{\topmargin}{0pt}
\setlength{\headsep}{0pt}
\setlength{\headheight}{0pt}
\setlength{\oddsidemargin}{0pt}
\setlength{\evensidemargin}{0pt}

\usepackage{amsmath,amssymb,amsthm}
\usepackage{graphicx}
\usepackage{tikz}
\usetikzlibrary{arrows}
\usetikzlibrary{calc}
\usetikzlibrary{intersections}
\usetikzlibrary{arrows,shapes,positioning}
\usetikzlibrary{decorations.markings}
\tikzstyle arrowstyle=[scale=1]
\usetikzlibrary{graphs}
\tikzstyle{directed}=[postaction={decorate,decoration={markings,mark=at position .7 with {\arrow[arrowstyle]{stealth}}}}]
\usepackage[colorlinks=true,citecolor=black,linkcolor=black,urlcolor=blue]{hyperref}

\usepackage[all]{xy}
\usepackage{enumerate}

\usepackage{diagbox}
%
\usepackage{comment}
\DeclareMathOperator{\R}{\mathbb{R}}



\DeclareMathOperator{\Mt}{\mathcal{M}}                     

\newtheorem{definition}{Definition}[section]

\newtheorem{example}{Example}[section]
\newtheorem{theorem}{Theorem}[section]
\newtheorem{lemma}{Lemma}[section]
\newtheorem{proposition}{Proposition}[section]

\newtheorem{remark}{Remark}[section]

\title{Graphical zonotopes with the same face vector}


\author{Zeying Xu
\thanks{\small Department of Mathematics, Shanghai Jiao Tong University,  Shanghai, China.
E-mail: \nobreak{zeying\_xu@outlook.com}}
}

\begin{document}

\maketitle

\begin{abstract}
We are interested in constructing zonotopes which are combinatorially nonequivalent but have the same face vector.  In this paper we introduce a quadrilateral flip operation on graphs. We show that, if one graph is obtained from another graph by a flip, then the face vectors of the graphical zonotopes of these two graphs are the same. In this way, we can easily construct a class of  combinatorially nonequivalent graphical zonotopes which share the same face vector. It is known that all triangulations of the $n$-gon are connected through the flip operation. Thus their graphical zonotopes have the same face vector. We will compute this vector and the total number of faces.

 \end{abstract}

\section{Introduction}

A zonotope is the Minkowski sum of several line segments.
Given a set of vectors $V=\{v_1,\dots,v_m\}$ in $\R^n$, we define a zonotope
\[Z(V)=[0,v_1]+\dots + [0,v_m].\]
The combinatorial structure of faces of $Z(V)$ is totally determined by the oriented matroid $\Mt(V)$ of $V$: the face poset of $Z(V)$ is anti-isomorphic with the poset of covectors of $\Mt(V)$ \cite{BLSWZ99}. In this paper we are interested in constructing and understanding  zonotopes which are combinatorially different but have the same face vector.

Let $G=(V(G),E(G))$ be a simple (no loops or multiple
edges) connected graph on vertex set $\{1,2,\dots,n\}$. The graphical zonotope $Z_G$ of $G$ is defined by
\[Z_G=\sum_{ij\in E(G)}[e_i, e_j],\]
where $e_1,\dots,e_n$ are the coordinate vectors in $\R^n$. Many properties of graph $G$ are encoded by its graphical zonotope. For example, the volume of $Z_G$ equals the number of spanning trees of $G$ and the number of lattice points in $Z_G$ equals to the number of forests in $G$ \cite[Proposition 2.4]{postnikov09}. The number of vertices of $Z_G$ is equal to the number of acyclic orientations of $G$.
Graphical zonotope of the complete graph is just the permutahedron and all graphical zonotopes are in the class of  generalized permutahedron \cite{postnikov09}.
 Gruji\'c \cite{Grujic17} showed a relation between the face vector of $Z_G$ and  the q-analog of the chromatic symmetric function of $G$. Study of face vectors of graphical zonotopes can also be found in Postnikov et.al. \cite{PRW08}.

For each edge $ij$ of $G$, we denote by $\overrightarrow{{ij}}$ the orientation of $ij$ from $i$ to $j$. A partial orientation $X$ of $G$ is an orientation of a subgraph $H$ of $G$. We can regard $X$ as the directed graph whose underlying graph is $H$. If all the edges of $G$ are oriented, then we say $X$ is a full orientation. The {\em genus} $g(X)$ of $X$ is defined to be the genus $g(H)$ of $H$, which is  the edge number of $H$ minus the vertex number of $H$ and then plus the number of connected components of $H$. Let $X^0$ denote the subgraph of $G$ with edge set $E(G)\setminus E(H)$.  For two partial orientations $X, Y$ of $G$, we define $X\leq Y$ if $X$ is a sub-digraph of $Y$. Two partial orientations $X$ and $Y$ are {\em orthogonal} if either no edge is simultaneously oriented by $X$ and $Y$ or there exist at least two edges such that one is oriented by $X$ and $Y$ in a same direction and the other is oriented by  $X$ and $Y$ in reverse directions.

For a graph $G$, we let $r(G)$ be the edge number of a maximal spanning forest of $G$. In the following, we introduce the oriented matroid of graph $G$ in terms of partial orientations.

 A partial orientation  $X$ of  $G$ is a {\em vector of $G$} if every arc (oriented edge) of $X$ is contained in a directed circuit. Let $\mathcal V(G)$ denote the set of vectors of graph $G$. Then the poset $(\mathcal V(G),\leq)$ is graded and the rank $rank(X)$ of each vector $X$ is given by $g(X)$.

 A partial orientation  $X$ of  $G$ is a {\em covector of $G$} if it is orthogonal with every directed circuit of $G$. Let $\mathcal L(G)$ denote the set of covectors of graph $G$.  Then the poset $(\mathcal L(G),\leq)$ is graded and the rank $rank(X)$ of each covector $X$ is given by $r(G)-r(X^0)$ (see \cite[Corollary 4.1.15]{BLSWZ99}).

Given a planar graph $G$, let $G^*$ be the dual planar graph of $G$. Then  $(\mathcal V(G^*),\leq)$ is isomorphic with $(\mathcal L(G),\leq)$.

\begin{proposition} \label{prop:anti}
The face poset of $Z_G$ is anti-isomorphic with the poset $(\mathcal L(G), \leq)$.
\end{proposition}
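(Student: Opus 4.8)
The plan is to combine two facts the excerpt already supplies: first, the general theory of oriented matroids says the face poset of a zonotope $Z(V)$ is anti-isomorphic to the poset of covectors of the oriented matroid $\Mt(V)$; second, for a graphical zonotope the relevant oriented matroid is the graphic oriented matroid of $G$, whose covectors are exactly the elements of $(\mathcal L(G),\leq)$ as defined just above the statement. So the proposition is really the assertion that the abstract covector poset of $\Mt(Z_G)$ coincides with the combinatorially described poset $(\mathcal L(G),\leq)$. I would therefore reduce the claim to identifying $\Mt(Z_G)$ with the graphic oriented matroid $\Mt(G)$ and then invoke the general anti-isomorphism.

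First I would recall the construction of $Z_G$. Writing $v_{ij}=e_i-e_j$ (up to sign/translation, since $[e_i,e_j]=e_j+[0,e_i-e_j]$), the zonotope $Z_G$ is, up to translation, $Z(V)$ for the vector configuration $V=\{\,e_i-e_j : ij\in E(G)\,\}$. This configuration is precisely the one whose oriented matroid is the graphic oriented matroid of $G$: choosing for each edge $ij$ the orientation $\overrightarrow{ij}$ corresponds to the vector $e_i-e_j$, and the signed circuits and signed cocircuits of this configuration are exactly the directed circuits and directed cuts of $G$. Thus $\Mt(Z_G)=\Mt(G)$ as oriented matroids.

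Next I would match the combinatorial data. By the general zonotope--oriented-matroid dictionary cited at the top of the paper, the nonempty faces of $Z_G$ are in inclusion-reversing bijection with the covectors of $\Mt(G)$. It then remains to check that the covectors of the graphic oriented matroid $\Mt(G)$, encoded as sign vectors on $E(G)$, are exactly the partial orientations $X$ that are orthogonal to every directed circuit of $G$, i.e.\ the elements of $\mathcal L(G)$: a sign vector is a covector of $\Mt(G)$ iff it is orthogonal (in the oriented-matroid sense) to every signed circuit, and under the edge-orientation encoding this orthogonality condition is precisely the orthogonality of partial orientations defined earlier in the excerpt. Because the ordering $X\le Y$ on partial orientations (sub-digraph containment) agrees with the conformal/composition order on sign vectors, the poset structures match as well. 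Stacking the general anti-isomorphism with this identification yields the anti-isomorphism between the face poset of $Z_G$ and $(\mathcal L(G),\le)$.

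The main obstacle is the careful verification that the oriented-matroid notion of orthogonality between sign vectors translates exactly into the paper's combinatorial definition of orthogonality of partial orientations, and that the rank data match (covector rank $r(G)-r(X^0)$ corresponds to face dimension). This is essentially a dictionary check rather than a deep step, but it must be done with attention to sign conventions and to the treatment of the zero entries (unoriented edges) so that the inclusion order and the composition order are seen to coincide; once that bookkeeping is in place, the statement follows directly from the cited general result in \cite{BLSWZ99}.
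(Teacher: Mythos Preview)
Your proposal is correct and is exactly the natural justification, but note that the paper does not actually supply a proof of this proposition: it is stated without proof, immediately after the general fact ``the face poset of $Z(V)$ is anti-isomorphic with the poset of covectors of $\Mt(V)$'' is quoted from \cite{BLSWZ99} and the combinatorial description of $\mathcal L(G)$ is given. In other words, the paper treats the proposition as the evident specialization of that cited result to the graphic case, and your write-up is precisely the dictionary check one would perform to make that specialization explicit.
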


This paper is organized as follows. In Section \ref{sec:flip} we introduce the quadrilateral flip operation on graphs and prove that graphical zonotopes of two flip-equivalent graphs have the same face vector. In Section \ref{sec:gon} we compute the face vector and total face number of graphical zonotopes of triangulations of the $n$-gon.
\section{Flip on graphs}
\label{sec:flip}

Flipping has long been important in the study of
triangulations. Given a triangulation of a point configuration in the plane, a flip means replacing two triangles by two different triangles that cover the same quadrilateral. The flip graph is a graph defined on triangulations such that two triangulations are connected if they are related by a flip. Please refer to  \cite{bose09} for a survey of the flip graph of triangulations of planar point set. A generalization of flip operation to higher dimension is called bistellar flip \cite{de10}.

We define the following quadrilateral flip operation on graphs.

\begin{definition}[quadrilateral flip]
Let $G$ be a simple connected graph. Let $v_1,v_2,v_3,v_4$ be four  vertices of $G$ such that the subgraph $H$ induced by these four vertices is a $4$-cycle with edges $v_1v_2,v_2v_3,v_3v_4,v_4v_1$ and a diagonal edge $v_2v_4$.  Suppose further that we can divide $V(G)\setminus \{v_1,v_2,v_3,v_4\}$ into four disjoint sets $V_1,V_2,V_3,V_4$  such that, for every edge $uv$ of $G$ which is not $v_2v_4$, $u$ and $v$ belong to  $V_i\cup\{v_i,v_{i+1}\}$ ($v_5$ is $v_1$) for some $i\in \{1,2,3,4\}$. Then let $G'$ be the new graph obtained from $G$ by deleting edge $v_2v_4$ and then adding edge $v_1v_3$. Such an operation from $G$ to $G'$ is called a {\em quadrilateral flip}, or simply a {\em flip}.
\end{definition}

See figure below for illustration of the flip operation. We say two graphs are flip-equivalent if one can be transformed to another through a sequence of flip operations.

\begin{figure}[htbp]
\centering

\begin{tikzpicture}[inner sep=0pt, v/.style={circle,draw=black,fill=black, inner sep=0.6pt}]
\node[v](v) at (-0.5,-0.5) [label=below left:$v_4$] {};
\node[v](a) [above  =of v, label=above left:$v_1$] {};
\node[v](b) [ right =of v, label=below right:$v_3$] {};
\node[v](u) [right =of a, label=above right:$v_2$] {};



\draw[-] (u) to (a) to (v) to (b) to (u) to (v);

\draw[-,dashed] (a) to [out=90,in=180] (0,1.5) to [out=0,in=90]  (u);
\draw[-,dashed] (u) to [out=0,in=90]  (1.5,0) to [out=-90,in=0] (b);
\draw[-,dashed] (b) to [out=-90,in=0] (0,-1.5) to [out=-180,in=-90](v);
\draw[-,dashed] (v) to [out=180,in=-90] (-1.5,0)to [out=90,in=180] (a);

\node at (3,0) [label=above:flip]{$\longrightarrow$};

\tikzset{xshift=6cm}

\node[v](v) at (-0.5,-0.5) [label=below left:$v_4$] {};
\node[v](a) [above  =of v, label=above left:$v_1$] {};
\node[v](b) [ right =of v, label=below right:$v_3$] {};
\node[v](u) [right =of a, label=above right:$v_2$] {};



\draw[-] (a) to (v) to (b) to (u) to (a) to (b);

\draw[-,dashed] (a) to [out=90,in=180] (0,1.5) to [out=0,in=90]  (u);
\draw[-,dashed] (u) to [out=0,in=90]  (1.5,0) to [out=-90,in=0] (b);
\draw[-,dashed] (b) to [out=-90,in=0] (0,-1.5) to [out=-180,in=-90](v);
\draw[-,dashed] (v) to [out=180,in=-90] (-1.5,0)to [out=90,in=180] (a);

\end{tikzpicture}
\end{figure}

\begin{theorem}\label{thm:flip}
Graphical zonotopes of two flip-equivalent graphs have the same face vector.
\end{theorem}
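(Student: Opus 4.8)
The plan is to pass from $Z_G$ to the dual graphical hyperplane arrangement and to treat the flip as the replacement of a single hyperplane.

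First I would invoke Proposition \ref{prop:anti}: the covectors in $\mathcal{L}(G)$ are exactly the faces of the graphical arrangement $\mathcal{A}_G=\{x_i=x_j : ij\in E(G)\}\subseteq\R^n$, so the face vector of $Z_G$ is (up to reversal) the vector counting the faces of $\mathcal{A}_G$ by dimension. It therefore suffices to show that $\mathcal{A}_G$ and $\mathcal{A}_{G'}$ have the same number of faces of each dimension. Writing $B$ for the common graph obtained by deleting the diagonal from $G$ (equivalently from $G'$), one has $\mathcal{A}_G=\mathcal{A}_B\cup\{H_{24}\}$ and $\mathcal{A}_{G'}=\mathcal{A}_B\cup\{H_{13}\}$, where $H_{24}=\{x_{v_2}=x_{v_4}\}$ and $H_{13}=\{x_{v_1}=x_{v_3}\}$. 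Adding one hyperplane $H$ to an arrangement splits each face whose relative interior meets $H$ transversally into two faces of the same dimension and creates one new face of one dimension lower on $H$, while the faces lying inside $H$ contribute nothing net to the count; this gives $f_d(\mathcal{A}_B\cup H)=f_d(\mathcal{A}_B)+s_d(H)+s_{d+1}(H)$, with $s_k(H)$ the number of $k$-faces of $\mathcal{A}_B$ cut transversally by $H$. Hence it is enough to prove $s_k(H_{24})=s_k(H_{13})$ for all $k$.

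Next I would read $s_k(H_{24})$ off the covectors of $B$: a face (covector $Y$) is cut transversally by $H_{24}$ exactly when the relation between $x_{v_2}$ and $x_{v_4}$ is left \emph{unforced} by $Y$. Here the lobe hypothesis enters decisively. Since every edge other than the diagonal lies inside some $V_i\cup\{v_i,v_{i+1}\}$, a covector of $B$ can force an equality or a strict inequality between $v_i$ and $v_{i+1}$ only through the cycle edge $v_iv_{i+1}$ itself; consequently whether a diagonal is free depends only on the signs of $Y$ on the four cycle edges, i.e. on the preorder that the $4$-cycle induces on $\{v_1,\dots,v_4\}$. Grouping covectors of $B$ by the tight-set $\mathbf{e}\subseteq\{v_1v_2,v_2v_3,v_3v_4,v_4v_1\}$ of cycle edges on which $Y$ is an equality, I would argue that the number of covectors of $B$ of each fixed dimension with a given $\mathbf{e}$ depends only on $\mathbf{e}$: each lobe's contribution is controlled solely by whether its bounding cycle edge is tight, because reversing all orientations inside a lobe is a dimension-preserving involution interchanging the two strict relations at its terminals. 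This collapses the problem to the bare $4$-cycle.

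It then remains to show, for each tight-set $\mathbf{e}$, that the number of realizable strict completions in which $v_2,v_4$ are incomparable equals the number in which $v_1,v_3$ are incomparable. I would settle this with the dihedral symmetry of the square: whenever $\mathbf{e}$ is invariant under one of the two reflections swapping the diagonals — precisely when $v_1v_2$ is tight iff $v_3v_4$ is, or $v_2v_3$ is tight iff $v_4v_1$ is — that reflection is the required bijection; in the only remaining case, where the two tight edges are adjacent, three consecutive vertices are identified and the fourth is strictly separated, so both diagonal pairs are comparable and both counts vanish. The main obstacle, and the real content, is exactly this final step: first the bookkeeping showing that the lobe data factor through the tight-set $\mathbf{e}$ (so the whole problem reduces to the four-vertex $4$-cycle), and then the asymmetric tight-sets, for which no diagonal-exchanging symmetry of the square exists and one must instead verify that the relevant face counts are zero. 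Everything before this — the reduction to a single added hyperplane and the translation of $s_k$ into a statement about unforced diagonals — is routine once the arrangement viewpoint is set up.
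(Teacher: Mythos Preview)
Your argument is correct and follows a genuinely different route from the paper's. The paper constructs an explicit rank-preserving bijection $\mathcal{L}(G)\to\mathcal{L}(G')$ by a four-way case split on how many edges of the $4$-cycle a covector $X$ orients; in the main case (all four cycle edges oriented) it rotates the quadrilateral part $X'$ by $90^\circ$ under $v_i\mapsto v_{i+1}$ to obtain $Y'$, and then negates each lobe covector $X_i$ precisely when the rotation flips the sign on $v_iv_{i+1}$. You instead drop to the common subgraph $B=G\setminus\{v_2v_4\}=G'\setminus\{v_1v_3\}$ and compare the effect of adjoining the single hyperplane $H_{24}$ versus $H_{13}$ to $\mathcal{A}_B$; the same lobe-negation involution then shows that the number of dimension-$d$ extensions of a $4$-cycle sign pattern depends only on its tight-set $\mathbf e$, so that the whole question collapses to a dihedral-symmetry statement on the bare square. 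Both proofs rest on the identical lobe-reversal trick, but the paper packages it into one global map while you use it to factor the count and expose the underlying symmetry; your version makes it more transparent why the $V_1,\dots,V_4$ hypothesis is exactly what is needed, whereas the paper's explicit bijection could in principle transport finer invariants than the $f$-vector alone. One phrasing quibble: when you write that ``the number of covectors of $B$ of each fixed dimension with a given $\mathbf e$ depends only on $\mathbf e$'', what you actually need---and what your involution proves---is that this count is the same for every \emph{sign pattern} $\sigma$ on the non-tight cycle edges with zero-set $\mathbf e$; as literally stated the sentence is a tautology.
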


\begin{proof}
Let $G'$ be a graph obtained from a graph $G$ by a flip that removes  edge $v_2v_4$ and adds edge $v_1v_3$. It is obvious that $r(G)=r(G')$. For each $i=1,2,3,4$, let $G_i$ be the induced subgraph of $G$ with vertex set $V_i\cup \{v_i,v_{i+1}\}$. Let $H$ and $H'$ be the induced subgraphs of $G$ and $G'$ with vertex set $\{v_1,v_2,v_3,v_4\}$, respectively. In what follows, we will explicitly construct a bijection from covectors $X$ of $G$ to covectors $Y$ of $G'$ such that $rank(X)=rank(Y)$. Note that $Y$ is a covector of $G'$ if its restrictions on $G_1,\dots,G_4$ and $H'$ are covectors of $G_1,\dots,G_4$ and $H'$, respectively.  Let $X'$ and $Y'$ be the restriction of $X$ and $Y$ on  $H$ and $H'$, respectively.

\textbf{Case 1.} No edge in $H$ is oriented by $X$. Let $Y$ equals $X$ on edges not in $H'$ and let $Y$ do not orient any edge in $H'$. It is obvious that $rank(X)=rank(Y)$.

\begin{figure}[htbp]
\centering

\begin{tikzpicture}[ inner sep=0pt, v/.style={circle,draw=black,fill=black, inner sep=0.6pt}]

\node[v](v) at (-0.5,-0.5) [label=below left:$v_4$] {};
\node[v](a) [above  =of v, label=above left:$v_1$] {};
\node[v](b) [ right =of v, label=below right:$v_3$] {};
\node[v](u) [right =of a, label=above right:$v_2$] {};

\draw[-] (u) to (a) to (v) to (b) to (u) to (v);
\draw[-,directed] (u) to (a);
\draw[-,directed] (v) to (a);

\node at (1,0) {$\longrightarrow$};

\node at (1,-1.3) {$(1)$};

\tikzset{xshift=2cm}

\node[v](v) at (-0.5,-0.5) [label=below left:$v_4$] {};
\node[v](a) [above  =of v, label=above left:$v_1$] {};
\node[v](b) [ right =of v, label=below right:$v_3$] {};
\node[v](u) [right =of a, label=above right:$v_2$] {};

\draw[-] (a) to (v) to (b) to (u) to (a) to (b);
\draw[-,directed] (u) to (a);
\draw[-,directed] (v) to (a);
\draw[-,directed] (b) to (a);


\tikzset{xshift=2cm}

\node[v](v) at (-0.5,-0.5) [label=below left:$v_4$] {};
\node[v](a) [above  =of v, label=above left:$v_1$] {};
\node[v](b) [ right =of v, label=below right:$v_3$] {};
\node[v](u) [right =of a, label=above right:$v_2$] {};

\draw[-] (u) to (a) to (v) to (b) to (u) to (v);
\draw[-,directed] (u) to (a);
\draw[-,directed] (u) to (b);
\draw[-,directed] (u) to (v);

\node at (1,0) {$\longrightarrow$};
\node at (1,-1.3) {$(2)$};

\tikzset{xshift=2cm}

\node[v](v) at (-0.5,-0.5) [label=below left:$v_4$] {};
\node[v](a) [above  =of v, label=above left:$v_1$] {};
\node[v](b) [ right =of v, label=below right:$v_3$] {};
\node[v](u) [right =of a, label=above right:$v_2$] {};

\draw[-] (a) to (v) to (b) to (u) to (a) to (b);
\draw[-,directed] (u) to (a);
\draw[-,directed] (u) to (b);

\tikzset{xshift=2cm}

\node[v](v) at (-0.5,-0.5) [label=below left:$v_4$] {};
\node[v](a) [above  =of v, label=above left:$v_1$] {};
\node[v](b) [ right =of v, label=below right:$v_3$] {};
\node[v](u) [right =of a, label=above right:$v_2$] {};

\draw[-] (u) to (a) to (v) to (b) to (u) to (v);
\draw[-,directed] (u) to (a);
\draw[-,directed] (b) to (v);
\draw[-,directed] (u) to (v);

\node at (1,0) {$\longrightarrow$};
\node at (1,-1.3) {$(3)$};

\tikzset{xshift=2cm}

\node[v](v) at (-0.5,-0.5) [label=below left:$v_4$] {};
\node[v](a) [above  =of v, label=above left:$v_1$] {};
\node[v](b) [ right =of v, label=below right:$v_3$] {};
\node[v](u) [right =of a, label=above right:$v_2$] {};

\draw[-] (a) to (v) to (b) to (u) to (a) to (b);
\draw[-,directed] (u) to (a);
\draw[-,directed] (b) to (v);
\draw[-,directed] (b) to (a);

\end{tikzpicture}
\caption{Case 2.}
\label{fig:casetwo}
\end{figure}

\textbf{Case 2.} $X$ orients two edges in the $4$-cycle of $H$. This case has three subcases: $(1)$, two edges incident with $v_1$ or $v_3$ are oriented; $(2)$, two edges incident with $v_2$ or $v_4$ are oriented, then $v_2v_4$ is also oriented; $(3)$, two non-adjacent edges are oriented, then $v_2v_4$ is also oriented. We have shown one example for each of the three cases in Fig. \ref{fig:casetwo}. For each case, we let $Y$ equals $X$ on all edges of $G'$ other than $v_1v_3$. Then the orientation situation of $Y$ on edge $v_1v_3$ is determined. For example, in figure $(1)$, since $\overrightarrow{v_2v_1},\overrightarrow{v_4v_1}\in Y$, we must have $\overrightarrow{v_3v_1}\in Y$ to make $Y$  a covector. It is easy to see that $Y$ is indeed a covector of $G'$. Moreover, we have $rank(X)=rank(Y)$. Indeed, for example, in case $(1)$, $X^0$  can be obtained from $Y^0$ by adding one more edge between two vertices in a same connected component of $Y^0$. So $r(X^0)=r(Y^0)$, and thus $rank(X)=rank(Y)$.

\textbf{Case 3.} $X$ orients three edges in the $4$-cycle of $H$. See Fig. \ref{fig:casethree} for an example. In this case $u_2u_4$ is also oriented. If not, then there is a triangle with only one edge oriented, contradicting the fact that $X$ is a covector. We let $Y$ equals $X$ on edges of $G'$ other than $u_1u_3$. Note that there is also a triangle of $H'$ containing edge $u_1u_3$ and exactly one of the two boundary edges is oriented. So let $Y$ orient edge $u_1u_3$ in the only allowed direction. Now $X^0=Y^0$, so $rank(X)=rank(Y)$.

\begin{figure}[htbp]
\centering

\begin{tikzpicture}[ inner sep=0pt, v/.style={circle,draw=black,fill=black, inner sep=0.6pt}]

\node[v](v) at (-0.5,-0.5) [label=below left:$v_4$] {};
\node[v](a) [above  =of v, label=above left:$v_1$] {};
\node[v](b) [ right =of v, label=below right:$v_3$] {};
\node[v](u) [right =of a, label=above right:$v_2$] {};

\draw[-] (u) to (a) to (v) to (b) to (u) to (v);
\draw[-,directed] (u) to (a);
\draw[-,directed] (b) to (v);
\draw[-,directed] (u) to (v);
\draw[-,directed] (b) to (u);

\node at (1.5,0) {$\longrightarrow$};

\tikzset{xshift=3cm}

\node[v](v) at (-0.5,-0.5) [label=below left:$v_4$] {};
\node[v](a) [above  =of v, label=above left:$v_1$] {};
\node[v](b) [ right =of v, label=below right:$v_3$] {};
\node[v](u) [right =of a, label=above right:$v_2$] {};

\draw[-] (a) to (v) to (b) to (u) to (a) to (b);
\draw[-,directed] (u) to (a);
\draw[-,directed] (b) to (v);
\draw[-,directed] (b) to (u);
\draw[-,directed] (b) to (a);
\end{tikzpicture}
\caption{Case 3.}
\label{fig:casethree}
\end{figure}

\textbf{Case 4.} $X$ orients four edges in the $4$-cycle of $H$. In this case we construct $Y$ in the following way. We first let $Y'$ be isomorphic with $X'$ under the map $v_i\rightarrow v_{i+1}$. So if forgetting about labels, then $Y'$ looks just like the rotation of $X'$ by $90$ degree. For each $i=1,2,3,4$, let $X_i$ and $Y_i$ be the restriction of $X$ and $Y$ on $G_i$ respectively. Then for each $i=1,2,3,4$, if $v_iv_{i+1}$ is oriented in the same direction for $X'$ and $Y'$, set $Y_i=X_i$; otherwise, let $Y_i$ be the reverse of $X_i$, denoted as $-X_i$. See figure below for illustration. Though in  example below $v_2v_4$ is oriented in $X$, we also have cases that $v_2v_4$ is not oriented.

\begin{figure}[htbp]
\centering

\begin{tikzpicture}[inner sep=0pt, v/.style={circle,draw=black,fill=black, inner sep=0.6pt}]

\node[v](v4) at (-0.5,-0.5) [label=below left:$v_4$] {};
\node[v](v1) [above  =of v4, label=above left:$v_1$] {};
\node[v](v3) [ right =of v4, label=below right:$v_3$] {};
\node[v](v2) [above right =of v4, label=above right:$v_2$] {};

\draw[-,directed] (v1) to (v2);
\draw[-,directed] (v1) to (v4);
\draw[-,directed] (v2) to (v4);
\draw[-,directed] (v2) to (v3);
\draw[-,directed] (v3) to (v4);

\draw[-,dashed] (v1) to [out=90,in=180] (0,1.5) to [out=0,in=90]  (v2);
\draw[-,dashed] (v2) to [out=0,in=90]  (1.5,0) to [out=-90,in=0] (v3);
\draw[-,dashed] (v3) to [out=-90,in=0] (0,-1.5) to [out=-180,in=-90](v4);
\draw[-,dashed] (v4) to [out=180,in=-90] (-1.5,0)to [out=90,in=180] (v1);

\node at (0,1) {$X_1$};
\node at (1,0) {$X_2$};
\node at (0,-1) {$X_3$};
\node at (-1,0) {$X_4$};

\node at (3,0) {$\longrightarrow$};

\tikzset{xshift=6cm}

\node[v](v4) at (-0.5,-0.5) [label=below left:$v_4$] {};
\node[v](v1) [above  =of v4, label=above left:$v_1$] {};
\node[v](v3) [ right =of v4, label=below right:$v_3$] {};
\node[v](v2) [above right =of v4, label=above right:$v_2$] {};

\draw[-,directed] (v4) to (v1);
\draw[-,directed] (v4) to (v3);
\draw[-,directed] (v1) to (v3);
\draw[-,directed] (v1) to (v2);
\draw[-,directed] (v2) to (v3);

\draw[-,dashed] (v1) to [out=90,in=180] (0,1.5) to [out=0,in=90]  (v2);
\draw[-,dashed] (v2) to [out=0,in=90]  (1.5,0) to [out=-90,in=0] (v3);
\draw[-,dashed] (v3) to [out=-90,in=0] (0,-1.5) to [out=-180,in=-90](v4);
\draw[-,dashed] (v4) to [out=180,in=-90] (-1.5,0)to [out=90,in=180] (v1);

\node at (0,1) {$X_1$};
\node at (1,0) {$X_2$};
\node at (0,-1) {$-X_3$};
\node at (-1,0) {$-X_4$};

\end{tikzpicture}
\end{figure}

It is easy to see that $Y$ is a covector of $G'$. Now we will show that $rank(X)=rank(Y)$. Note that for each $i\in \{1,2,3,4\}$,  there is no path from $v_i$ to $v_{i+1}$ in $X_i^0$. If not, together with edge $v_iv_{i+1}$ we have a cycle with only one  edge oriented, contradicting the fact that  $X$ is a covector. So we see that $r(X^0)=r(Y^0)=r(X_1^0)+\dots+r(X_4^0)$ when $v_2v_4$ is oriented and $r(X^0)=r(Y^0)=r(X_1^0)+\dots+r(X_4^0)+1$ when $v_2v_4$ is not oriented. So $rank(X)=rank(Y)$.

At last, through above cases we have defined a map from covectors $X$ of $G$ to covectors $Y$ of $G'$. We can see that this map is injective. As $G$ and $G'$ are flip-equivalent, we can also define a similar map from $G'$ to $G$. Thus our map is a bijection. We have already shown that this map keeps rank. So the theorem is proved.
\end{proof}

\begin{remark}
In our definition of flip on graphs, the extra condition on $V_1,\dots,V_4$ is essential.
For example, the following two graphs are skeleton graphs of two triangulations of $5$ points on the plane that are related by a flip. But these two graphs are not flip-equivalent in our definition. Their graphical zonotopes do have different face vectors. For the left graph it is $(72,150,102,24)$ and for the right graph it is $(78,168,116,26)$. The computation is done by Polymake \cite{polymake}.
\begin{figure}[htbp]
\centering
\begin{tikzpicture}[scale=0.7, inner sep=0pt, v/.style={circle,draw=black,fill=black, inner sep=0.6pt}]

\draw
(0,0) --++(1,1) --++ (2,-1) --++ (-1,0) --++ (-1,1) --++  (0,-2) --++ (2,1);

\draw[-] (0,0) to (1,-1);
\draw[-] (1,-1) to (2,0);

\tikzset{xshift= 6cm}

\draw
(0,0) --++(1,1) --++ (2,-1) --++ (-3,0) --++ (1,-1) --++ (2,1);

\draw[-] (2,0) to (1,1);
\draw[-] (2,0) to (1,-1);

\end{tikzpicture}
\end{figure}

\end{remark}

\begin{remark}
Whitney \cite{whitney33} characterized when two graphs have isomorphic matroids. The matroids of two flip-equivalent graphs are usually not isomorphic. Thus their graphical zonotopes are usually not combinatorially equivalent.
\end{remark}

\begin{example}
The following kind of graphs admit several flip operations. These graphs are in tree shapes; namely, we can  divide such a graph into blocks and then the adjacency relation between these blocks is a tree.

\begin{figure}[htbp]
\centering

\begin{tikzpicture}[scale=0.5, inner sep=0pt, v/.style={circle,draw=black,fill=black, inner sep=0.6pt}]
\def \x{4cm};
\def \y{2cm};

\node[v](v4) at (-0.5,-0.5)  {};
\node[v](v1) at (-0.5,0.5) {};
\node[v](v3) at (0.5,-0.5) {};
\node[v](v2) at (0.5,0.5) {};

\draw[-] (v1) to (v2) to (v3) to (v4) to (v1) to (v4) to (v2);

\node[v](m4) at ($(-0.5,-0.5)+(0.5*\x,\y)$)  {};
\node[v](m1) at ($(-0.5,0.5)+(0.5*\x,\y)$) {};
\node[v](m3) at ($(0.5,-0.5)+(0.5*\x,\y)$) {};
\node[v](m2) at ($(0.5,0.5)+(0.5*\x,\y)$) {};

\draw[-] (m1) to (m2) to (m3) to (m4) to (m1) to (m4) to (m2);

\node[v](u4) at ($(-0.5,-0.5)+(\x,0)$)  {};
\node[v](u1) at ($(-0.5,0.5)+(\x,0)$) {};
\node[v](u3) at ($(0.5,-0.5)+(\x,0)$) {};
\node[v](u2) at ($(0.5,0.5)+(\x,0)$) {};

\draw[-] (u1) to (u2) to (u3) to (u4) to (u1) to (u3);

\node[v](w4) at ($(-0.5,-0.5)+2*(\x,0)$)  {};
\node[v](w1) at ($(-0.5,0.5)+2*(\x,0)$) {};
\node[v](w3) at ($(0.5,-0.5)+2*(\x,0)$) {};
\node[v](w2) at ($(0.5,0.5)+2*(\x,0)$) {};

\draw[-] (w1) to (w2) to (w3) to (w4) to (w1) to (w4) to (w2);

\draw[-,dashed]
(m4) to (v2)
(v3) to (u4)
(u1) to (m3)
(u2) to (w1)
(u3) to (w4)
;

\draw[-,dashed] (v1) to [out=90,in=180] ($0.5*(v1)+0.5*(v2)+(0,1)$) to [out=0,in=90]  (v2);
\draw[-,dashed] (v3) to [out=-90,in=0] ($0.5*(v3)+0.5*(v4)+(0,-1)$) to [out=-180,in=-90](v4);
\draw[-,dashed] (v4) to [out=180,in=-90] ($0.5*(v1)+0.5*(v4)+(-1,0)$)to [out=90,in=180] (v1);

\draw[-,dashed] (w1) to [out=90,in=180] ($0.5*(w1)+0.5*(w2)+(0,1)$) to [out=0,in=90]  (w2);
\draw[-,dashed] (w2) to [out=0,in=90]  ($0.5*(w3)+0.5*(w2)+(1,0)$) to [out=-90,in=0] (w3);
\draw[-,dashed] (w3) to [out=-90,in=0] ($0.5*(w3)+0.5*(w4)+(0,-1)$) to [out=-180,in=-90](w4);

\draw[-,dashed] (u1) to [out=90,in=180] ($0.5*(u1)+0.5*(u2)+(0,1)$) to [out=0,in=90]  (u2);
\draw[-,dashed] (u3) to [out=-90,in=0] ($0.5*(u3)+0.5*(u4)+(0,-1)$) to [out=-180,in=-90](u4);

\draw[-,dashed] (m1) to [out=90,in=180] ($0.5*(m1)+0.5*(m2)+(0,1)$) to [out=0,in=90]  (m2);
\draw[-,dashed] (m2) to [out=0,in=90]  ($0.5*(m3)+0.5*(m2)+(1,0)$) to [out=-90,in=0] (m3);
\draw[-,dashed] (m4) to [out=180,in=-90] ($0.5*(m1)+0.5*(m4)+(-1,0)$)to [out=90,in=180] (m1);

\end{tikzpicture}
\end{figure}
\end{example}

\section{Triangulations of the $n$-gon}
\label{sec:gon}

The $n$-gon is a $2$-dimensional polytope with $n$ edges.
In this section we consider special graphs that are  skeleton graphs of triangulations of the $n$-gon. From definition, it is clear that if two triangulations of the $n$-gon are related by a flip, then their skeleton graphs are flip-equivalent.
 It is well known that the flip graph of triangulations of the $n$-gon can be realized as the skeleton graph of the $n-2$ dimensional associahedron (also called the Stasheff polytope) \cite{Lee89}.  The diameter of this flip graph is $2n-8$ when $n> 11$ \cite{pournin14,sleator88}. See Fig. \ref{fig:fg6} for an example of flip graph of triangulations of the $6$-gon.

\begin{figure}[htbp]
\centering
\includegraphics[scale=0.3]{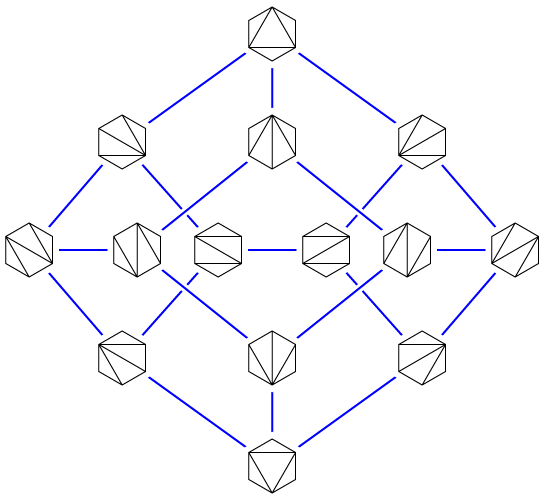}
\caption{Flip graph of triangulations of the $6$-gon \cite{fg6}.}
\label{fig:fg6}
\end{figure}

\begin{proposition}\label{prop:connectivity}
The flip graph of triangulations of the $n$-gon is connected.
\end{proposition}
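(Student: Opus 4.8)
The plan is to prove connectivity by reducing every triangulation to a single canonical one. Since a flip is an involution (flipping the newly created diagonal of the same quadrilateral restores the original triangulation), flip-equivalence is a symmetric relation, and it therefore suffices to show that every triangulation of the $n$-gon is flip-equivalent to one fixed triangulation. I would take this canonical triangulation to be the \emph{fan} $T_0$ rooted at vertex $1$, in which all $n-3$ diagonals join vertex $1$ to the vertices $3,4,\dots,n-1$ (labelling the boundary vertices $1,\dots,n$ cyclically).

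First I would introduce the monovariant $\delta(T)$ equal to the number of diagonals of $T$ that are not incident to vertex $1$. Clearly $\delta(T)\ge 0$, with equality exactly when $T=T_0$, and $\delta(T)\le n-3$. The heart of the argument is the claim that whenever $T\neq T_0$ there is a single flip producing a triangulation $T'$ with $\delta(T')=\delta(T)-1$. Granting this, starting from an arbitrary $T$ and repeatedly applying such flips strictly decreases $\delta$, so after at most $n-3$ flips we reach $T_0$; composing this sequence with the reverse of the corresponding sequence for a second triangulation connects any two triangulations, proving the proposition.

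To establish the claim I would examine the triangles of $T$ incident to vertex $1$; these form a fan, and the edges of these triangles opposite to vertex $1$ together cover the portion of the boundary ``seen'' from $1$. If every such opposite edge were a boundary edge of the polygon, the fan around $1$ would already cover the whole $n$-gon and $T$ would equal $T_0$. Hence, since $T\neq T_0$, some triangle $\{1,a,b\}$ of $T$ has its opposite edge $\{a,b\}$ equal to a diagonal; let $\{a,b,c\}$ be the triangle on the other side of $\{a,b\}$, where necessarily $c\neq 1$. The four vertices $1,a,c,b$ span a convex quadrilateral in which $\{a,b\}$ is a diagonal, so flipping $\{a,b\}$ is legal and replaces it by the diagonal $\{1,c\}$. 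This removes one diagonal not incident to $1$ and adds one incident to $1$, so $\delta$ drops by exactly one.

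I expect the main obstacle to be the careful combinatorial verification underlying the claim: that such a triangle $\{1,a,b\}$ with diagonal opposite edge really exists when $T\neq T_0$, that the opposite triangle has third vertex $c\neq 1$, and that the relevant quadrilateral $1,a,c,b$ is convex so the flip is admissible (this convexity is automatic for a convex polygon but should be recorded explicitly). Once these geometric facts are pinned down, the remainder is routine bookkeeping with the monovariant $\delta$ together with the reversibility of flips.
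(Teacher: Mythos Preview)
Your argument is correct and is precisely the classical proof of this well-known fact: the fan triangulation at a fixed vertex serves as a canonical target, and the monovariant $\delta(T)$ counting diagonals missing vertex~$1$ strictly decreases under the flip you describe. The only points requiring care---that vertex~$1$ is incident to at least one triangle (guaranteed by the boundary edges $\{1,2\}$ and $\{1,n\}$), that $c\neq 1$ (since the two triangles on a diagonal are distinct), and that the quadrilateral $1,a,c,b$ is convex (automatic for vertices of a convex polygon)---you have already identified and they all hold.

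The paper itself does not supply a proof of this proposition: it is stated without proof as a known result, with the surrounding discussion pointing to the associahedron realization and references such as \cite{Lee89}. So there is no competing argument in the paper to compare against; your proof is the standard one and would fill the gap cleanly.
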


In the following, we do not distinguish a triangulation of the $n$-gon and its skeleton graph.  Combining  Theorem \ref{thm:flip}  and Proposition \ref{prop:connectivity}, we immediately see that graphical zonotopes of all triangulations of the $n$-gon have a same face vector. We will prove  the following results in this section.
\begin{theorem} \label{thm:facevector}
The graphical zonotope of every triangulation of the $n$-gon has the face vector $f=(f_0,f_1,\dots,f_{n-1})$ where for $i=0,\dots,n-1:$
\[f_{n-1-i}=\sum_{m=0}^{\min\{i,n-i\}}2^m3^{i-m}{i-1\choose m-1}{n \choose m+i}.\]
\end{theorem}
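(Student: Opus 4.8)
The plan is to combine Proposition~\ref{prop:anti} with the flip-invariance already established, so as to reduce the computation to one conveniently chosen triangulation and then count its covectors by rank. By Proposition~\ref{prop:anti} the number $f_d$ of $d$-dimensional faces of $Z_G$ equals the number of covectors in $\mathcal L(G)$ of rank $n-1-d$: since $\dim Z_G=n-1$ and $rank(X)=r(G)-r(X^0)=(n-1)-r(X^0)$, the order-reversing anti-isomorphism of Proposition~\ref{prop:anti} matches a rank-$k$ covector with an $(n-1-k)$-dimensional face, so $f_{n-1-i}$ is exactly the number of rank-$i$ covectors. Because all triangulations of the $n$-gon are flip-equivalent (Proposition~\ref{prop:connectivity} with Theorem~\ref{thm:flip}), it suffices to count rank-$i$ covectors for the \emph{fan} triangulation $T_n$, whose skeleton has vertex $1$ adjacent to all of $2,\dots,n$ together with the path $2\,3\cdots n$.

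First I would set up a structural model for covectors. Realising each covector by a vertex potential $\phi\colon V(G)\to\R$ --- orienting an edge $uv$ by the sign of $\phi(u)-\phi(v)$ and leaving it unoriented when the two values agree --- identifies $\mathcal L(G)$ with the set of pairs $(\pi,O)$, where $\pi$ is a partition of $V(G)$ into blocks each inducing a connected subgraph (the connected components of the unoriented subgraph $X^0$) and $O$ is an acyclic orientation of the quotient graph $G/\pi$. For such a covector $X^0$ is the union of the within-block edges, so $r(X^0)=\sum_{B\in\pi}(|B|-1)=n-|\pi|$ and hence $rank(X)=|\pi|-1$. Thus rank-$i$ covectors correspond to connected partitions into $i+1$ blocks, each equipped with an acyclic orientation of its quotient.

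Next I would specialise to $T_n$. Since vertex $1$ is universal, its block $B_0$ is connected whatever further vertices it absorbs, while every other block lies inside the path $2\,3\cdots n$ and hence must be an interval of consecutive vertices. Writing $S$ for the set of path vertices absorbed into $B_0$, a connected partition into $i+1$ blocks amounts to the choice of $S$ together with a subdivision of the remaining vertices into $i$ interval-blocks; these group into $m$ maximal runs, two interval-blocks lying in the same run exactly when no vertex of $S$ separates them. The quotient $T_n/\pi$ is then the graph join of the single vertex $B_0$ with the disjoint union of the $m$ paths formed by the runs (a path $P_{b_j}$ for the run carrying $b_j$ interval-blocks, with $\sum_j b_j=i$). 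Its chromatic polynomial is $P(q)=q(q-1)^{m}(q-2)^{\,i-m}$, and Stanley's evaluation $(-1)^{|V|}P(-1)$ gives exactly $2^{m}3^{\,i-m}$ acyclic orientations, independent of the individual $b_j$.

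It then remains to count, for each $m$, the connected partitions of $T_n$ into $i+1$ blocks with exactly $m$ runs; each contributes the factor $2^{m}3^{\,i-m}$. Such a configuration is determined independently by a composition $i=b_1+\cdots+b_m$ recording how many interval-blocks each run carries ($\binom{i-1}{m-1}$ choices) and by the list of the $i$ interval-block sizes together with the gap sizes (two end gaps $\ge 0$ and $m-1$ internal gaps $\ge 1$) summing to $n-1$; a stars-and-bars count gives $\binom{n}{m+i}$ for the latter. Hence there are $\binom{i-1}{m-1}\binom{n}{m+i}$ such partitions, and summing $2^{m}3^{\,i-m}\binom{i-1}{m-1}\binom{n}{m+i}$ over the admissible range $0\le m\le\min\{i,n-i\}$ (forced by $m\le i$ and $m+i\le n$) yields the claimed formula. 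The main obstacle is the structural step: justifying that the covectors are precisely the pairs $(\pi,O)$ with $rank(X)=|\pi|-1$, and then reading off the fan's quotient correctly; once these are in place the chromatic-polynomial evaluation and the stars-and-bars enumeration are routine.
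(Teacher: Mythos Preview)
Your proof is correct, and the combinatorics is carried out cleanly; the final three factors $2^m3^{i-m}$, $\binom{i-1}{m-1}$, $\binom{n}{m+i}$ match the paper's exactly. However, the route you take is genuinely different from the paper's.

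The paper never works with covectors of $T_n$ directly. Instead it passes to the planar dual $T_n^*$, which is the binary caterpillar tree $C_n$ with its leaves glued, and uses the isomorphism $(\mathcal L(T_n),\le)\cong(\mathcal V(C_n),\le)$ to count \emph{vectors} of $C_n$ (partial orientations in which every arc lies on a directed leaf-to-leaf path). Lemma~\ref{lem:rank} computes their rank as $n-b_0(\Gamma_X)$, and Lemma~\ref{lem:vertex} shows $C_k$ has $2\cdot3^{k-2}$ full vectors; the enumeration then proceeds by choosing the isolated leaves of $\Gamma_X$ (your $\binom{n}{m+i}$), choosing $m-1$ non-adjacent interior edges to leave unoriented in the contracted caterpillar (your $\binom{i-1}{m-1}$), and multiplying the full-vector counts on the resulting pieces (your $2^m3^{i-m}$). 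So the two arguments are combinatorially dual: your connected partitions of $T_n$ with $m$ runs correspond to the paper's vectors of $C_n$ with $m$ non-singleton components of $\Gamma_X$, and your Stanley chromatic-polynomial evaluation $(-1)^{i+1}P(-1)=2^m3^{i-m}$ replaces the paper's inductive Lemma~\ref{lem:vertex}. Your approach is more self-contained---it uses only the standard face description of graphical zonotopes and Stanley's theorem, avoiding the planar-duality step entirely---while the paper's caterpillar setup has the advantage of feeding directly into the linear recursion used to prove Theorem~\ref{thm:totalnumber}.
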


\begin{theorem}\label{thm:totalnumber}
The total number of faces of the graphical zonotope of a triangulation of the $(n+2)$-gon is $\left(
 \begin{array}{cc}
 1 & 1 \\
 \end{array}
 \right)
\left(
      \begin{array}{cc}
        1 & 1 \\
        2 & 4 \\
      \end{array}
    \right)^{n}
  \left(
    \begin{array}{c}
      1 \\
      2 \\
    \end{array}
  \right)$, which is also equal to

\noindent $3\sum_{\frac{n}{2}\leq m\leq n}{m\choose n-m}5^{2m-n}(-2)^{n-m}-2\sum_{\frac{n-1}{2}\leq m\leq n-1}{m\choose n-m-1}5^{2m-n+1}(-2)^{n-m-1}$.
\end{theorem}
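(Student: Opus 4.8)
The plan is to use the fact, established just above the theorem, that all triangulations of the $(n+2)$-gon share the same face vector, hence the same total face number; so it suffices to compute on one convenient representative, the \emph{fan} triangulation $F_n$, whose skeleton has vertices $0,1,\dots,n+1$ on the boundary cycle together with all diagonals $\{0,i\}$. By Proposition~\ref{prop:anti} the total number of faces of $Z_{F_n}$ equals $|\mathcal L(F_n)|$, and I would realize each covector as the sign pattern induced on the edges by a \emph{potential} (a real function on the vertices): orient $ij$ by the sign of the potential difference, and leave $ij$ unoriented when its endpoints receive equal values. This is the standard description of the covectors of a graphic oriented matroid, and it makes counting extensions transparent.

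The core is a transfer-matrix recursion obtained by building $F_{n+1}$ from $F_n$ one triangle at a time: a new vertex $w=n+2$ is attached to the two endpoints $0$ and $v=n+1$ of the current \emph{active edge} $\{0,v\}$, and the new active edge becomes $\{0,w\}$. Since $w$ has degree two, restricting a covector of $F_{n+1}$ to the edges of $F_n$ is again a covector, and conversely each covector $X$ of $F_n$ extends by placing the potential of $w$ relative to those of $0$ and $v$, subject only to the triangle $\{0,v,w\}$ carrying an admissible pattern. The crucial point is that the number of extensions, and the resulting orientation status of $\{0,w\}$, depend only on whether $\{0,v\}$ is oriented or unoriented in $X$. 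Letting $a_n$ (resp.\ $b_n$) count the covectors of $F_n$ with active edge unoriented (resp.\ oriented): when $\{0,v\}$ is unoriented there are three admissible positions of $w$ (below, equal, above the common value), one keeping $\{0,w\}$ unoriented and two orienting it; when $\{0,v\}$ is oriented there are five admissible positions of $w$ relative to the two distinct values, exactly one of which keeps $\{0,w\}$ unoriented. This yields
\[
a_{n+1}=a_n+b_n,\qquad b_{n+1}=2a_n+4b_n .
\]
The degenerate base case $n=0$ is the single edge $K_2$, whose three covectors give $a_0=1$, $b_0=2$; iterating and summing coordinates produces
\[
T_n=a_n+b_n=\begin{pmatrix}1&1\end{pmatrix}\begin{pmatrix}1&1\\2&4\end{pmatrix}^{n}\begin{pmatrix}1\\2\end{pmatrix},
\]
the first asserted expression.

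For the closed form I would pass to the recurrence. The transfer matrix has characteristic polynomial $\lambda^2-5\lambda+2$, so $T_n=5T_{n-1}-2T_{n-2}$ with $T_0=3$, $T_1=13$. Introduce the auxiliary solution $c_n=\sum_{k}\binom{n-k}{k}5^{n-2k}(-2)^k$, which satisfies the same recurrence with $c_0=1,c_1=5$ (read off from the generating function $1/(1-5x+2x^2)$). Matching the two initial values shows $T_n=3c_n-2c_{n-1}$, and re-indexing the sum for $c_n$ by $m=n-k$ (and for $c_{n-1}$ by $m=n-1-k$) turns $3c_n$ into the first displayed sum and $2c_{n-1}$ into the second, with the index ranges $\tfrac n2\le m\le n$ and $\tfrac{n-1}{2}\le m\le n-1$ coming precisely from the nonvanishing of the binomial coefficients.

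The main obstacle is the combinatorial bookkeeping behind the transfer matrix: one must verify that the extension count depends only on the active-edge state and not on the finer shape of the covector, which hinges on the new vertex having degree two in the fan, and one must enumerate correctly the three versus five admissible positions of $w$, sorting each by the resulting status of $\{0,w\}$. Once the recursion is in hand, the matrix power and the binomial reformulation are routine. (As a sanity check, summing the face vector of Theorem~\ref{thm:facevector} over all coordinates gives an independent route to the same total, which I would use to confirm the recursion.)
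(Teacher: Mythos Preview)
Your proposal is correct and lands on the same $2\times 2$ transfer matrix and the same linear recurrence $T_n=5T_{n-1}-2T_{n-2}$ as the paper, with the same initial data; the closed-form derivation via $c_n=\sum_k\binom{n-k}{k}5^{n-2k}(-2)^k$ and $T_n=3c_n-2c_{n-1}$ is exactly the generating-function computation the paper alludes to.

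The only real difference is the \emph{realization} of the two-state recursion. The paper passes to the planar dual: covectors of the fan $T_{n+2}$ become vectors of the caterpillar $C_{n+2}$, and the state variable is the orientation status of the pendant edge at the last leaf. You stay on the primal side, describe covectors as sign patterns of a potential, and take as state variable the orientation status of the last boundary edge $\{0,n+1\}$. Under planar duality these are the same edge, so it is no accident that your $(a_n,b_n)$ matches the paper's $(|\mathcal V^0_{n+2}|,|\mathcal V^*_{n+2}|)$ on the nose. Your potential picture makes the $3$-versus-$5$ extension count especially transparent and avoids introducing $C_n$; the paper's dual picture ties the count to the framework used for Theorem~\ref{thm:facevector}. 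Either way, the key observation---that the extension count depends only on the state of a single edge because the new vertex has degree two---is the same.
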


Let $T_n$ be the triangulation of the $n$-gon on vertex set $\{1,2,\dots,n\}$ such that all its interior edges are incident with vertex $1$. Note that $T_n$ is a planar graph. Its dual planar graph $T_n^*$ can be constructed from the {\em binary caterpillar tree}  $C_n$ by gluing all leaves into one vertex, where $C_n$ is a tree such that all its interior vertices have degree three and are on a path. We assume the leaves of $C_n$ are labelled by $1,2,\dots,n$ from left to right. See Fig.~\ref{fig:dual} for an example of $T_5, T_5^*$ and $C_5$.

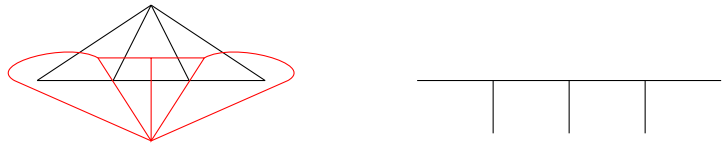
\begin{figure}[htbp]
\centering

\begin{tikzpicture}[v/.style={circle,draw=black,fill=black, inner sep=0.6pt}]

\coordinate (r) at (-0.5,0);

\foreach \i in {-2,-1,0,1}
{
\coordinate (v\i) at (\i,-1);

\draw[-] (r) to (v\i);
}

\draw[-] (v1) to (v-2);

\coordinate (t) at (-0.5,-1.8);
\coordinate (u1) at (-1.2, -0.7);
\coordinate (u2) at (-0.5,-0.7);
\coordinate (u3) at (0.2,-0.7);

\draw[-,color=red] (t) to  (u1) to (u2) to (u3) to (t) to (u2);

\draw[-,color=red] (u1) to [out=150, in=150] (-2.3, -1) to  (t);
\draw[-,color=red] (u3) to [out=30, in=30] (1.3, -1) to (t);

\draw[-] (3,-1) to (7,-1);
\draw[-] (5,-1) to (5,-1.7);
\draw[-] (6,-1) to (6,-1.7);
\draw[-] (4,-1) to (4,-1.7);

\end{tikzpicture}
\caption{$T_5, T^*_5$ and $C_5$.}
\label{fig:dual}
\end{figure}

As $T_n^*$ is obtained from $C_n$ by gluing its leaves, we can naturally identify partial orientations of $T_n^*$ with partial orientations of $C_n$. So we let $\mathcal V(C_n)$ be the set of partial orientations of $C_n$ that correspond to vectors of $T_n^*$, and also call elements in $\mathcal V(C_n)$ {\em vectors} of $C_n$. Then we have the following isomorphism
\[ (\mathcal L(T_n),\leq)\cong (\mathcal V(T_n^*),\leq)\cong (\mathcal V(C_n),\leq).\]
 
The following observation is obvious.
\begin{proposition}
A partial orientation $X$ of $C_n$ is a vector if and only if in $X$ every arc  is on a directed path between two leaves of $C_n$.
\end{proposition}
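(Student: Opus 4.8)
The plan is to translate the defining condition of a vector through the identification between partial orientations of $C_n$ and of $T_n^*$, reducing the whole statement to a description of the directed circuits of $T_n^*$. First I would recall the structure underlying the identification: since $T_n^*$ is obtained from the tree $C_n$ by gluing all $n$ leaves into a single vertex $p$, an arc of $X$ in $C_n$ is literally the corresponding arc of $X$ in $T_n^*$ (only the two leaf-endpoints get merged into $p$). Hence, by the very definition of $\mathcal V(C_n)$, we have $X\in\mathcal V(C_n)$ if and only if every arc of $X$ is contained in a directed circuit of $T_n^*$, and it suffices to rewrite the latter condition in terms of $C_n$.

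The key step is to identify the directed circuits of $T_n^*$ with the directed leaf-to-leaf paths of $C_n$. Because $C_n$ is a tree, the graph $T_n^*\setminus p$ is exactly the spine path of the caterpillar, hence acyclic; therefore every circuit of $T_n^*$ must pass through $p$, and being a simple cycle it uses exactly two edges incident to $p$. These two edges are the images of two edges of $C_n$ incident to two leaves $\ell_1,\ell_2$, and the rest of the circuit is the unique spine path joining their attachment points. Undoing the gluing, such a circuit is precisely the image of the unique path in $C_n$ between $\ell_1$ and $\ell_2$. A coherent orientation of the cycle—what makes it a \emph{directed} circuit—corresponds exactly to orienting this path consistently as a directed path from one leaf to the other; the degenerate case where $\ell_1,\ell_2$ are the two leaves attached to an endpoint of the spine (a digon at $p$) is covered in the same way by a length-two directed path.

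Combining the two observations yields the claim: every arc of $X$ lies in a directed circuit of $T_n^*$ if and only if every arc of $X$ lies on a directed path between two leaves of $C_n$. I expect the only point requiring genuine care to be the circuit correspondence in the middle step—namely verifying that acyclicity of the spine forces every circuit through $p$, and that ``coherently oriented simple cycle'' matches ``directed leaf-to-leaf path,'' including the digons arising from the two leaves at each end of the caterpillar. Since $C_n$ is a tree, the path between any two leaves is unique, so no ambiguity of multiple paths arises, and the remainder of the argument is a direct unwinding of definitions.
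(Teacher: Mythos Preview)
Your argument is correct and is precisely the natural way to justify this proposition: identify cycles of $T_n^*$ with leaf-to-leaf paths of $C_n$ via the gluing, observe that consistent orientation on the cycle corresponds to a directed path, and then read off the equivalence from the definition of a vector. The paper itself does not give a proof---it simply declares the observation ``obvious''---so your write-up is a faithful expansion of what the author left implicit, including the small care points (every cycle of $T_n^*$ must pass through the glued vertex $p$ because $T_n^*\setminus p$ is the spine path, and the digon case at the ends of the caterpillar).
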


Now we investigate the rank function of poset $(\mathcal V(C_n),\leq)$. For each partial orientation $X$ of $C_n$, let $\Gamma_X$ be the digraph on leaves of $C_n$ such that $\overrightarrow{ij}\in \Gamma_X$ if $i$ can reach $j$ through a directed path in $X$. Let $b_0(\Gamma_X)$ be the number of weakly connected components of $\Gamma_X$.

\begin{lemma}\label{lem:rank}
The rank of an element $X$ in $(\mathcal V(C_n),\leq)$ is $n-b_0(\Gamma_X)$.
\end{lemma}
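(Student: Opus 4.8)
The plan is to prove this by establishing that the rank of a vector $X$ of $C_n$ equals the genus $g(X)$ of the corresponding partial orientation of $T_n^*$, and then computing that genus in terms of $b_0(\Gamma_X)$. Recall from the excerpt that the poset $(\mathcal V(C_n),\leq)$ is identified with $(\mathcal V(T_n^*),\leq)$, which is graded with $rank(X)=g(X)$, the genus of the underlying subgraph $H$ of $T_n^*$ carrying the oriented edges of $X$. Since $T_n^*$ is obtained from the caterpillar $C_n$ by gluing all leaves into a single vertex, an oriented edge of $C_n$ corresponds to an oriented edge of $T_n^*$, so I must track how this gluing affects the genus formula $g(H)=|E(H)|-|V(H)|+c(H)$.

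First I would set up the correspondence carefully. Let $H$ be the subgraph of $C_n$ consisting of the oriented arcs of $X$ together with their endpoints, and let $\tilde H$ be the corresponding subgraph of $T_n^*$ after gluing leaves. The edge counts agree, $|E(\tilde H)|=|E(H)|$, but the vertex count and component count change because all leaves collapse to one vertex. The key combinatorial quantity to identify is $b_0(\Gamma_X)$, the number of weakly connected components of the reachability digraph on leaves. By the preceding proposition, every arc of $X$ lies on a directed leaf-to-leaf path, so each connected component of $H$ in $C_n$ contains at least two leaves (the endpoints of such a path), and the leaves appearing in $H$ are precisely those that are endpoints of arcs' paths; I would argue that the weakly connected components of $\Gamma_X$ correspond exactly to the equivalence classes of leaves that become linked once leaves are merged into one vertex in $T_n^*$.

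The core computation is then bookkeeping with the genus formula. In $C_n$ the subgraph $H$ is a forest (caterpillars are trees, so $H$ has no cycles), hence $g(H)=0$ trivially; the genus comes entirely from the gluing. After identifying all leaves of $C_n$ to a single vertex, each component of $H$ that touched $k$ leaves contributes $k-1$ to the reduction in vertex count relative to the number of ``independent'' leaf attachments, which is exactly what creates cycles in $\tilde H$. I would compute $g(\tilde H)=|E(H)|-|V(\tilde H)|+c(\tilde H)$ and show, by summing over components of $H$ and accounting for how the single glued vertex merges components, that this equals $n-b_0(\Gamma_X)$. The cleanest route is to observe that $n$ counts all leaves, and $b_0(\Gamma_X)$ counts how many leaf-groups remain weakly disconnected in the reachability structure, so $n-b_0(\Gamma_X)$ counts the number of ``excess'' leaf-identifications, each of which raises the genus by one.

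The main obstacle will be the precise reconciliation between $c(\tilde H)$, the component count after gluing, and $b_0(\Gamma_X)$, the weak-component count of the reachability digraph $\Gamma_X$. These are defined on different objects — one on the glued graph $T_n^*$, the other on an auxiliary digraph on leaves — so I expect the heart of the proof to be a lemma showing these two counts are compatible, i.e. that two leaves lie in the same weakly connected component of $\Gamma_X$ exactly when their corresponding edges end up in the same component of $\tilde H$ after gluing. I would handle this by tracing a directed leaf-to-leaf path in $X$: such a path witnesses an arc $\overrightarrow{ij}$ in $\Gamma_X$, and conversely any weak connection in $\Gamma_X$ corresponds to a walk in $\tilde H$ through the glued vertex. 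Once this identification is established, substituting into the genus formula and simplifying yields $rank(X)=g(\tilde H)=n-b_0(\Gamma_X)$, completing the proof.
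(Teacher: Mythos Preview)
Your overall strategy coincides with the paper's: identify $rank(X)$ with the genus $g(\tilde H)$ of the underlying subgraph in $T_n^*$, then compute that genus from the leaf combinatorics. The paper carries this out by decomposing the underlying graph of $X$ in $C_n$ into its connected subtrees $T_1,\dots,T_m$ (one for each non-isolated weak component of $\Gamma_X$, with leaf-counts $c_1,\dots,c_m$), observing that gluing the leaves of each $T_i$ alone yields a graph $H_i$ of genus $c_i-1$, and then using that genus is additive when these pieces are wedged at the single glued vertex, giving $g(\tilde H)=\sum_i(c_i-1)=n-b_0(\Gamma_X)$.

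The lemma you isolate as the ``main obstacle,'' however, is misstated and false as written. You propose to show that two leaves lie in the same weak component of $\Gamma_X$ exactly when their pendant edges lie in the same component of $\tilde H$ \emph{after gluing}. But after gluing, all $n$ leaves of $C_n$ are a single vertex of $T_n^*$, so any two oriented pendant edges automatically share that vertex and hence lie in the same component; in fact $c(\tilde H)=1$ whenever $X$ is nonempty, regardless of $b_0(\Gamma_X)$. The correspondence you actually need is one step earlier: the connected components of the underlying graph $H$ of $X$ \emph{in $C_n$, before gluing}, are in bijection with the non-isolated weak components of $\Gamma_X$. With that in hand, your direct Euler-characteristic computation goes through cleanly: $H$ is a forest with $m$ components and $\sum_i c_i=n-k$ leaves (where $k$ is the number of isolated vertices of $\Gamma_X$), gluing collapses those $n-k$ leaves to one vertex and merges all $m$ components into one, so $g(\tilde H)=|E(H)|-(|V(H)|-(n-k)+1)+1=(n-k)-m=n-b_0(\Gamma_X)$. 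This is exactly the paper's argument unpacked at the level of the genus formula.
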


\begin{proof}
Let $X'$ be the covector of $T_n^*$ that corresponds to covector $X$ of $C_n$. Let $H$ be the underlying graph of $X'$. Then we know that $rank(X)=rank(X')=g(H)$.

Suppose that $\Gamma_X$ have $k$ isolated vertices and $m$ weakly connected components with vertex sizes $c_1,\dots,c_m$ which are at least $2$.  Then the underlying graph of $X$ is a disjoint union of $m$ subtrees $T_1,\dots, T_m$ of $C_n$. For each tree $T_i$, let $H_i$ be obtained from $T_i$ by gluing its leaves, then $g(H_i)=c_i-1$. Observe that $H$ can be obtained from $H_1,\dots, H_m$ by taking one vertex from each graph and then gluing these $m$ vertices into one vertex. So $g(H)=g(H_1)+\dots+g(H_m)=c_1+\dots+c_m-m=n-k-m=n-b_0(\Gamma_X)$.
\end{proof}

\begin{lemma} \label{lem:vertex}
The number of full vectors of $C_n$ is $2\cdot 3^{n-2}$ when $n\geq 2$.
\end{lemma}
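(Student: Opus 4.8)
The plan is to count full vectors of $C_n$ directly, after translating the defining condition into a purely local one on the interior vertices. By the characterization of vectors of $C_n$ recalled above, a \emph{full} orientation $X$ of $C_n$ (one orienting every edge) is a vector precisely when every arc lies on a directed path joining two leaves. Since $C_n$ is a tree, every orientation of it is acyclic, and I claim this is equivalent to the local statement that \emph{every interior vertex has at least one incoming and at least one outgoing edge}, i.e.\ no interior vertex is a source or a sink. Establishing this equivalence is the conceptual heart of the proof.

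For the forward direction, suppose no interior vertex is a source or a sink and fix an arc $u\to v$. Starting from $v$ and repeatedly following an outgoing edge, I never revisit a vertex (the tree is acyclic), so the walk terminates, and it can only terminate at a vertex with no outgoing edge, that is, a sink; by hypothesis this must be a leaf. Tracing backward from $u$ along incoming edges likewise ends at a leaf, and concatenating gives a directed leaf-to-leaf path through $u\to v$. Conversely, if some interior vertex $w$ is a source, then any edge $w\to x$ cannot lie on a directed leaf-to-leaf path: such a path would have to reach $w$ along an incoming edge or begin at $w$, but $w$ has no incoming edge and is not a leaf; the sink case is symmetric. Thus $a_n$, the number of full vectors of $C_n$, equals the number of orientations in which every interior vertex has both an in-edge and an out-edge.

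It remains to count these orientations, and I would do so by the recursion $a_n=3\,a_{n-1}$ for $n\ge 4$. Recall the caterpillar structure: the interior vertices $w_1,\dots,w_{n-2}$ form a path, leaf $1$ and leaf $n$ are attached to $w_1$ and $w_{n-2}$, and leaf $i+1$ is pendant at $w_i$. Deleting the rightmost interior vertex $w_{n-2}$ together with leaf $n$, and reattaching the spine edge at $w_{n-3}$ to leaf $n-1$, carries a valid orientation of $C_n$ to a valid orientation of $C_{n-1}$, since the edges at $w_1,\dots,w_{n-3}$ keep their directions and hence their in/out balance. Conversely, given a valid orientation of $C_{n-1}$, the edge at $w_{n-3}$ reinterpreted as the spine edge to $w_{n-2}$ already supplies $w_{n-2}$ with one in- or one out-edge, so $w_{n-2}$ becomes valid exactly when its two leaf edges furnish the missing direction at least once, which happens in $2^2-1=3$ ways. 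Hence each valid orientation of $C_{n-1}$ has exactly three preimages, giving $a_n=3a_{n-1}$; with the base cases $a_3=2^3-2=6$ (the single interior vertex with three leaf edges, minus the all-in and all-out orientations) and $a_2=2$ checked directly, this yields $a_n=2\cdot 3^{n-2}$ for all $n\ge 2$.

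I expect the main obstacle to be the characterization of the first two paragraphs, namely arguing carefully that the global "on a directed leaf-to-leaf path" condition collapses to the local source/sink condition via the acyclicity of the tree; once it is in place the enumeration is routine. I would double-check the result in two ways: the same count is the transfer-matrix product along the spine with transfer matrix $\left(\begin{smallmatrix}2&1\\1&2\end{smallmatrix}\right)$ and weight $3$ at each end, whose left eigenvector $(1,1)$ with eigenvalue $3$ reproduces $2\cdot 3^{n-2}$; and since $a_n$ also equals the number of acyclic orientations of the fan graph $T_n$, its chromatic polynomial $k(k-1)(k-2)^{n-2}$ gives $(-1)^nP(-1)=2\cdot 3^{n-2}$, in agreement.
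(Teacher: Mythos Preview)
Your argument is correct and follows essentially the same route as the paper: both establish the recursion $a_n=3a_{n-1}$ by removing the two rightmost leaves (equivalently, the rightmost interior vertex) and observing that each full vector of $C_{n-1}$ extends in exactly three ways. Your treatment is more explicit in that you first reduce the global ``directed leaf-to-leaf path'' condition to the local no-source/no-sink condition at interior vertices---the paper uses this implicitly---and your chromatic-polynomial and transfer-matrix cross-checks are nice extras not present in the paper.
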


\begin{proof}
The subgraph obtained from $C_n$ by deleting leaves $n$ and $n-1$ is isomorphic with $C_{n-1}$. The restriction of every full vector of $C_n$ on $C_{n-1}$ gives rise to a full vector of $C_{n-1}$. On the other hand, every full vector of $C_{n-1}$ can be extended to $3$ different full vectors of $C_n$. Also note that $C_2$ has $2$ full vectors. So the number of full vectors of $C_n$ is $2\cdot 3^{n-2}$.
\end{proof}

\begin{proof}[Proof of Theorem \ref{thm:facevector}]
In the following, we will compute the vector $f$ basing on $Z_{T_n}$. Let $i$ be an integer in $\{0,1,\dots, n-1\}$. According to Proposition \ref{prop:anti}, $f_{n-1-i}$ is the number of covectors of $T_n$ of rank $i$, which is also the number of vectors of $C_n$ of rank $i$. Then by Lemma \ref{lem:rank}, we conclude that  $f_{n-1-i}$ is the number of vectors $X$ of $C_n$ such that $b_0(\Gamma_X)=n-i$.

For each integer $m\leq n-i$, we enumerate the number of vectors $X$ of $C_n$ such that  $b_0(\Gamma_X)=n-i$ and exactly $m$ connected components of $\Gamma_X$ are not isolated vertices. Let $L$ be the set of isolated vertices of $\Gamma_X$. Then $|L|=n-i-m$.   Let $T$ be the subtree of  $C_n$ such that leaves of $T$ are leaves of $C_n$ that are not in $L$, and let $T'$ be obtained from $T$ by contracting all degree $2$ vertices. Then $T'$ is isomorphic with $C_{i+m}$.  Note that if $v$ is an interior vertex of $C_n$ that is adjacent with a leaf in $L$, let $uv$ and $wv$ be another two edges incident with $v$, then either $uv, wv\in X^0$ or $\overrightarrow{uv}, \overrightarrow{vw}\in X$ or $\overrightarrow{wv}, \overrightarrow{vu}\in X$.  So we let $X'$ be the vector of $T'$ such that each edge $ab$ is oriented from $a$ to $b$ whenever  $a$ can reach $b$ in $X$. Now $b_0(\Gamma_{X'})= m$ and $\Gamma_{X'}$ has no isolated vertices. So $X'^0$ must be $m-1$ interior edges of $T'$ such that no two of them are adjacent. Let $T'_1,\dots, T'_m$ be the connected components of the underlying graph of $X'$. We assume their leaf numbers are $c_1,\dots,c_m$ respectively. For each $i=1,\dots,m$,  let $X'_i$ be the restriction of $X'$ on $T'_i$, then $X'_i$ is a full vector of $T'_i$.

From above analysis we see that $X$ is totally determined by data $L$, $X'^0$ and $X'_1,\dots,X'_{m}$. We have ${n\choose m+i}$ choice for $L$. To choose $X'^0$ from $T'$ is equivalent with choosing $m-1$ non-adjacent numbers from $\{1,2,\dots,i+m-3\}$, and the  number of choices to this classical combinatorial problem is ${(i+m-3)-(m-1)+1 \choose m-1}={i-1\choose m-1}$. After $L$ and $X'^0$ are fixed, then $T'_1,\dots, T'_m$ are also fixed. Their leaf numbers satisfy $c_1+\dots +c_m=m+i$. By Lemma \ref{lem:vertex}, the number of choices of $X'_i$ is $2\cdot 3^{c_i-2}$ for each $i=1,\dots, m$. So the total number of choice of $X'_1,\dots,X'_{m}$ is $\prod_{i=1,\dots, m} 2\cdot 3^{c_i-2}=2^m 3^{c_1+\dots +c_m-2m}=2^m 3^{i-m}$.

To sum up, the number of vectors $X$ of $C_n$ such that  $b_0(\Gamma_X)=n-i$ and exactly $m$ connected components of $\Gamma_X$ are not isolated vertices is
\[2^m 3^{i-m}{n\choose m+i}{i-1\choose m-1}.\]
Thus $f_{n-i-1}=\sum_{0\leq m\leq \min\{i,n-i\}}2^m 3^{i-m}{n\choose m+i}{i-1\choose m-1}$.
\end{proof}

To enumerate the total number of faces of the graphical zonotope of a triangulation of the $n$-gon, we find it more convenient to do recursion than applying the formula in Theorem \ref{thm:facevector}.

\begin{proof}[Proof of Theorem \ref{thm:totalnumber}]
Let $\mathcal V_n^0,\mathcal V_n^1,\mathcal V_n^{-1}$ be the sets of vectors of $C_n$ such that the pendent edge of leaf $n$ is not oriented, oriented to $n$, oriented from $n$ respectively. Let $\mathcal V_n$ be the set of all  vectors of $C_n$.

Let $v$ be the interior vertex of $C_n$ which is adjacent with leaves $n$ and $n-1$. Let $u$ be another vertex of $C_n$ which is adjacent with $v$. Let $T$ be the subgraph of $C_n$ obtained by deleting leaves $n$ and $n-1$. Note that $T$ is isomorphic with $C_{n-1}$. Every vector $X$ of $C_n$ defines a vector $X'$ of $T$ which is the  restriction of $X$ on $T$.

For each vector $X\in \mathcal V_n^0$, if $vn-1,vu\in X^0$ then $X'\in \mathcal V_{n-1}^0$; if $\overrightarrow{uv}, \overrightarrow{vn-1}\in X$ then $X'\in \mathcal V_{n-1}^+$; if $\overrightarrow{vu}, \overrightarrow{n-1v}\in X$ then $X'\in \mathcal V_{n-1}^-$. So we see that
\begin{equation}\label{eq:v0}
   |\mathcal V^0_n| =  |\mathcal V^0_{n-1}|+ |\mathcal V^+_{n-1}|+ |\mathcal V^-_{n-1}|=|\mathcal V_{n-1}|.
\end{equation}

Similar analysis for $\mathcal V_n^+$ and $\mathcal V_n^-$ shows that
\begin{equation}\label{eq:v+}
    |\mathcal V^+_n| =  |\mathcal V^0_{n-1}|+ 3|\mathcal V^+_{n-1}|+ |\mathcal V^-_{n-1}|
\end{equation}
 and
 \begin{equation}\label{eq:v-}
     |\mathcal V^-_n| =  |\mathcal V^0_{n-1}|+ |\mathcal V^+_{n-1}|+ 3|\mathcal V^-_{n-1}|.
 \end{equation}

Denote $\mathcal V^*_n= \mathcal V^+_n\cup \mathcal V^-_n$. Then we have
\begin{equation*}
  \left(
    \begin{array}{c}
      |\mathcal V^0_n| \\
      |\mathcal V^*_n| \\
    \end{array}
  \right)
  = \left(
      \begin{array}{cc}
        1 & 1 \\
        2 & 4 \\
      \end{array}
    \right)
  \left(
    \begin{array}{c}
      |\mathcal V^0_{n-1}| \\
      |\mathcal V^*_{n-1}| \\
    \end{array}
  \right).
\end{equation*}
Note that $(|\mathcal V^0_2|,|\mathcal V^*_2|)=(1,2)$, so we have
\begin{equation*}
  \left(
    \begin{array}{c}
      |\mathcal V^0_{n+2}| \\
      |\mathcal V^*_{n+2}| \\
    \end{array}
  \right)
  = \left(
      \begin{array}{cc}
        1 & 1 \\
        2 & 4 \\
      \end{array}
    \right)^{n}
  \left(
    \begin{array}{c}
      1 \\
      2 \\
    \end{array}
  \right).
\end{equation*}
So $|\mathcal V_{n+2}|=\left(
 \begin{array}{cc}
 1 & 1 \\
 \end{array}
 \right)
\left(
      \begin{array}{cc}
        1 & 1 \\
        2 & 4 \\
      \end{array}
    \right)^{n}
  \left(
    \begin{array}{c}
      1 \\
      2 \\
    \end{array}
  \right).$

If we take the summation of Eqs.\eqref{eq:v0},  \eqref{eq:v+} and \eqref{eq:v-}, we will also obtain the recursive formula
\[|\mathcal V_n|=5|\mathcal V_{n-1}|-2|\mathcal V_{n-2}|.\]
By method of generating function, we then get
\[|\mathcal V_{n+2}|=3\sum_{\frac{n}{2}\leq m\leq n}{m\choose n-m}5^{2m-n}(-2)^{n-m}-2\sum_{\frac{n-1}{2}\leq m\leq n-1}{m\choose n-m-1}5^{2m-n+1}(-2)^{n-m-1}.\]
\end{proof}

\begin{remark}
The  sequence of total face number $|\mathcal V_{n}|$ is the sequence A052984 in OEIS \cite{oeis}. They are  also the Kekul\'{e} numbers for certain benzenoids \cite[see p. 78]{cyvin13}.
\end{remark}

\begin{figure}[htbp]\label{fig:Lynx}
\centering
\begin{tabular}{|l|llllllll|l|}
  \hline
  \diagbox[dir=SE]{$n$}{$i$} & 0 & 1& 2& 3& 4& 5 & 6 & 7 & Total\\
  \hline
  2 & 2 & 1 &  &  &  &  &  & & 3\\
  3 & 6 & 6 & 1 &  &  &  &  & & 13\\
  4 & 18 & 28 & 12 & 1 &  &  & & & 59\\
  5 & 54 & 114 & 80 & 20 & 1 &  &  & &  269\\
  6 & 162 & 432 & 422 & 180 & 30 & 1 &  &  & 1227\\
  7 & 486 & 1566 & 1962 & 1190 & 350 & 42 & 1 & & 5597\\
  8 & 1458 & 5508 & 8424 & 6640 & 2828 & 616 & 56 & 1 & 25531\\
  \hline
\end{tabular}
\caption{The face vectors of graphical zonotopes of triangulations of $n$-gon with $n=2, \ldots, 8$.}
\end{figure}

\bibliographystyle{plain}
\bibliography{reference}
\end{document}